\documentclass[12pt,reqno]{article}
\usepackage{enumerate}
\usepackage{fullpage,hyperref,bm}
\usepackage{amsmath, amssymb, amsthm, url, graphicx,rotating,tikz,tkz-graph,relsize}
\usepackage{relsize}
\usepackage{mathrsfs}
\usepackage{dsfont}
\usepackage{ulem}
\usepackage{titlesec}

\usepackage[noabbrev]{cleveref}

\hypersetup{
    colorlinks,
    linkcolor={red!50!black},
    citecolor={blue!50!black},
    urlcolor={blue!80!black}
}

\titleformat{\section}[hang]{\centering \scshape}{\thesection}{1em}{}

\newtheorem{thm}{Theorem}
\newtheorem{lm}[thm]{Lemma}
\newtheorem{crl}[thm]{Corollary}
\newtheorem{prop}[thm]{Proposition}

\newcommand{\ZZ}{\mathbb Z}
\newcommand{\one}{\mathbf 1}
\newcommand{\SA}{\mathsf{SA}}
\newcommand{\SAN}{\mathsf{SAN}}
\newcommand{\CA}{\mathsf{CA}}
\newcommand{\CAN}{\mathsf{CAN}}

\newcommand{\sett}[2]{ \left\{ #1 \, \, || \, \, #2 \right \} }

\begin{document}
\thispagestyle{empty}
\setcounter{page}{1}
\title{Skirting the $n$-tuples}
\author{
Sam Adriaensen
 \thanks{Department of Mathematics and Data 
Science, Vrije Universiteit Brussel, Pleinlaan 2, 
1050 Elsene, Belgium. The author is supported by post-doctoral fellowship 12A3Y25N by the Research Foundation Flanders (FWO). {\tt sam.adriaensen@vub.be}}
 \and
Ferdinand Ihringer
 \thanks{Department of Mathematics, Southern 
University of Science and Technology, Shenzhen, 
Guangdong, China. {\tt ihringer@sustech.edu.cn} }
 \and
William J.~Martin
 \thanks{Department of Mathematical Sciences,
Worcester Polytechnic Institute, Worcester, MA USA. {\tt  martin@wpi.edu}, {\tt  rvillagran@wpi.edu} }
 \and
Ralihe R.~Villagr\'an
 \footnotemark[3]
 }

\date{\today} 
\maketitle

\medskip

\begin{abstract}
Let $n\ge 2$ and $q\ge 2$ be given. The set $X = \ZZ_q^n$ is a metric space 
of diameter $n$ under the Hamming metric $d(\cdot,\cdot)$.  We seek a smallest 
set $S\subseteq X$ that ``skirts'' every $q$-ary $n$-tuple in the sense that 
every $x\in X$ is at distance $n$ from at least one element of $S$. Thus we aim 
to compute the total domination number $f(n,q)$ of the graph $G(n,q)$  with vertex set 
$X$ and edge set $\{ xy \, \| \, d(x,y)=n\}$. We provide constructions and bounds for this 
number, establishing $f(n,q) = C_q^{(1+o(1))n}$ for some constants $2=C_2>C_3 \geq \cdots$
which we are only able to estimate at the present time.
\end{abstract}

\noindent {\bf Keywords:} Hamming distance; covering array; total domination.

\noindent {\bf 2020 MSC Subject Codes:} 05B40, 05B15, 05B99, 05C69, 94B65.

%
%
\section{Introduction}
\label{Sec:intro}

Consider an integer $q \geq 2$, and the ring $\ZZ_q = \ZZ / q \ZZ$.
We refer to the elements of $\ZZ_q^n$ as \underline{$q$-ary $n$-tuples}.
The \underline{Hamming distance} between two $q$-ary $n$-tuples $x$ and $y$, denoted $d(x,y)$, equals the number of positions where $x$ and $y$ have a different entry.
In \cite{alon}, the authors' study of the hat guessing number of complete bipartite graphs leads them to prove that for any set $S$ of at most $e^{n/q}$ $n$-tuples over $\ZZ_q$, there exists an $n$-tuple $y \in \ZZ_q^n$ such that $d(x,y) < n$ for all $x \in S$ \cite[Lemma 3.3]{alon}.
We will phrase this result in another way.
We say that $x \in \ZZ_q^n$ \underline{skirts} $y \in \ZZ_q^n$ if $d(x,y) = n$, and we say that $S \subseteq \ZZ_q^n$ is a \underline{skirting set} if every $q$-ary $n$-tuple $y$ is skirted by some $x \in S$.
We denote the size of the smallest skirting set in $\ZZ_q^n$ by $f(n,q)$.
Thus, we can interpret \cite[Lemma 3.3]{alon} as stating that $f(n,q) > e^{n/q}$.
In this note, we take a closer look at the function $f(n,q)$.

\bigskip

Note that if $q=2$, then every $n$-tuple only skirts one $n$-tuple, hence trivially $f(n,2)=2^n$, and we will focus on the case $q>2$.

We also remark that if we define the graph $G(n,q)$ with vertex set $\ZZ_q^n$, and where $x$ is adjacent to $y$ if and only if $d(x,y) = n$, then a skirting set $S$ in $\ZZ_q^n$ is exactly the same thing as a \underline{total dominating set} in $G(n,q)$, i.e.\ every vertex of $G(n,q)$ has a neighbor in $S$.

\section{The asymptotics}
We use the asymptotic notation $g(n) = o(h(n))$ to mean that $\lim_{n \to \infty} g(n)/h(n) = 0$.

\begin{thm}\label{thm:rec}
 For every integer $q \geq 2$, there exists a constant $C_q = \inf_{n \geq 1} \sqrt[n]{f(n,q)}$ such that $f(n,q) = C_q^{(1+o(1))n}$.
 Moreover, the sequence $(C_q)_{q\geq 2}$ is non-increasing. 
\end{thm}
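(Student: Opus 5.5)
The plan is to show that $f(n,q)$ is submultiplicative in $n$ and then apply Fekete's lemma. For the monotonicity in $q$, we compare $f(n,q+1)$ with $f(n,q)$ directly.

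\textbf{Submultiplicativity.} We claim $f(m+n,q) \le f(m,q)\, f(n,q)$ for all $m,n \ge 1$. Let $S \subseteq \ZZ_q^m$ and $T \subseteq \ZZ_q^n$ be skirting sets of minimum size. Take $S \times T \subseteq \ZZ_q^{m+n}$, consisting of the concatenations $(s,t)$. Given $(y,z)$ with $y \in \ZZ_q^m$ and $z \in \ZZ_q^n$, choose $s \in S$ skirting $y$ and $t \in T$ skirting $z$. Then $(s,t)$ differs from $(y,z)$ in all $m+n$ coordinates, so $S \times T$ is a skirting set.

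\textbf{Applying Fekete.} Set $a_n = \log f(n,q)$. Then $(a_n)$ is subadditive, and $a_n \ge 0$ since $f(n,q) \ge 1$. Fekete's lemma gives
\[
\lim_{n\to\infty} \frac{a_n}{n} = \inf_{n \ge 1} \frac{a_n}{n} = \log C_q ,
\]
where $C_q = \inf_{n \ge 1} \sqrt[n]{f(n,q)} \ge 1$. The infimum is finite and nonnegative, so the limit exists. We would also like $C_q > 1$, so that the exponent $(1+o(1))n$ is meaningful; this follows from the quoted bound $f(n,q) > e^{n/q}$ of \cite[Lemma 3.3]{alon}, which gives $C_q \ge e^{1/q} > 1$. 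Since $\log C_q > 0$, we can write $a_n = (1+o(1))\, n \log C_q$, that is, $f(n,q) = C_q^{(1+o(1))n}$.

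\textbf{Monotonicity in $q$.} We claim $f(n,q+1) \le f(n,q)$ for every $n$. Let $S \subseteq \ZZ_q^n$ be a skirting set, and regard it as a subset of $\ZZ_{q+1}^n$ using symbols $0,\dots,q-1$, so that $q$ is the new symbol. Given $y \in \ZZ_{q+1}^n$, form $y' \in \ZZ_q^n$ by replacing every coordinate equal to $q$ with $0$. Pick $s \in S$ skirting $y'$. In coordinates where $y_i \ne q$, we have $s_i \ne y'_i = y_i$. In coordinates where $y_i = q$, we have $s_i \le q-1 < q = y_i$. Hence $s$ skirts $y$, so $S$ is a skirting set in $\ZZ_{q+1}^n$. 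Taking $n$-th roots and infima over $n$ gives $C_{q+1} \le C_q$.

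I do not expect a genuine obstacle. The only delicate point is converting the Fekete limit into the form $C_q^{(1+o(1))n}$, which requires $\log C_q > 0$, and the lower bound from \cite{alon} supplies exactly that.
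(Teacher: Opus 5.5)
Your proof is correct and follows the same route as the paper. It uses the product construction $S\times T$ to get $f(m+n,q)\le f(m,q)f(n,q)$, applies Fekete's lemma to $\ln f(n,q)$, and views a skirting set of $\ZZ_q^n$ inside $\ZZ_{q+1}^n$. You are in fact slightly more careful than the paper on two points it leaves implicit: you spell out why the embedded set still skirts tuples containing the new symbol, and you note that writing $\ln f(n,q)=(1+o(1))nL_q$ requires $L_q>0$. Your appeal to the bound from \cite{alon} supplies $L_q>0$, as would the counting bound $C_q\ge q/(q-1)$ of Proposition~\ref{Prop:Bounds}.
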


\begin{proof}
 Fix an integer $q \geq 2$.
 If $S_n$ and $S_m$ are skirting sets for $\ZZ_q^n$ and $\ZZ_q^m$ respectively of minimum size, then $S_n \times S_m$ (i.e.\ the set of vectors of $\ZZ_q^{n+m}$ obtained by concatenating any vector from $S_n$ and a vector from $S_m$) is a skirting set of $\ZZ_q^{n+m}$.
 It readily follows that $f(n+m,q) \leq f(n,q) f(m,q)$.
 In particular, the sequence $( \ln f(n,q) )_{n \in \mathbb N}$ is subadditive.
 By Fekete's Subadditive Lemma, $\lim_{n \to \infty} \frac{\ln f(n,q)}n$ exists and equals $L_q = \inf_{n \geq 1} \frac{\ln f(n,q)}n$.
 Therefore $\ln f(n,q) = (1+o(1)) n L_q$.
 Define $C_q = e^{L_q} = \inf_{n \geq 1} \sqrt[n]{f(n,q)}$.
 Then we find that $f(n,q) = C_q^{(1+o(1))n}$.

 Now observe that if $S$ is a skirting set for $\ZZ_q^n$, then it is also one for $\ZZ_{q+1}^n$ (where we identify the elements of $\ZZ_q$ with the integers $1, \dots, q$, so that we can see $S$ as a subset of $\ZZ_{q+1}^n$).
 This implies that $f(n,q)$ is non-increasing in $q$, thus, $C_q = \inf_{n \geq 1} \sqrt[n]{f(n,q)}$ is also non-increasing in $q$.
\end{proof}

Hence, to understand the asymptotics of $f(n,q)$, we study the constants $C_q$.
Any upper bound on $f(n,q)$ translates to an upper bound on $C_q$.
To this end, we determine $f(n,q)$ for $n < q$.

\begin{lm}
 If $n < q$, then $f(n,q) = n+1$.
\end{lm}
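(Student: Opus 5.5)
The proof has two parts: a construction of size $n+1$, and a pigeonhole argument showing no $n$ tuples suffice.

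\emph{Upper bound.} Since $n < q$, the ring $\ZZ_q$ has at least $n+1$ distinct elements, say $a_0,\dots,a_n$. Take $S=\{c_0,\dots,c_n\}$, where $c_i=(a_i,a_i,\dots,a_i)$ is the constant $n$-tuple. Given any $y\in\ZZ_q^n$, its coordinates take at most $n$ distinct values, so some $a_i$ does not occur among them. Then $c_i$ differs from $y$ in every position, i.e.\ $d(c_i,y)=n$, so $c_i$ skirts $y$. Hence $S$ is a skirting set and $f(n,q)\le n+1$.

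\emph{Lower bound.} We show that any set $S=\{x^{(1)},\dots,x^{(m)}\}$ with $m\le n$ fails to skirt some $y$. Since adding elements preserves the skirting property, it suffices to treat $m=n$, padding $S$ with repeated elements if necessary. Define $y$ by $y_j = x^{(j)}_j$ for $j=1,\dots,n$, i.e.\ take the diagonal. Then for each $j$, $y$ agrees with $x^{(j)}$ in coordinate $j$, so $d(x^{(j)},y)\le n-1<n$. Thus no element of $S$ skirts $y$, and $f(n,q)\ge n+1$. This lower bound in fact holds for every $q$; only the construction uses $n<q$.

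Neither step is a real obstacle. The only care needed is in the lower bound: one must note that a smaller $S$ can be padded up to size $n$, so that the diagonal construction applies.
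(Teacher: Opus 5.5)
Your proof is correct and takes the same approach as the paper: the upper bound uses $n+1$ constant tuples $a\one$, and the lower bound uses the diagonal tuple $y(i)=x_i(i)$ built from any $n$ candidate elements. Your padding remark for sets with fewer than $n$ elements is a small detail that the paper leaves implicit.
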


\begin{proof}
 First we prove that $f(n,q) > n$.
 If $S$ is a set of $n$ vertices in $\ZZ_q^n$, say $x_1, \dots, x_n$, then we can make a vector $y$ with $y(i) = x_i(i)$, which is clearly not skirted by any vector of $S$.

 On the other hand, if $S$ consists of $n+1$ scalar multiples of $\one$, then $S$ is a skirting set, since every $n$-tuple has at most $n$ distinct entries.
\end{proof}

We find the following bounds.

\begin{prop}
 \label{Prop:Bounds}
 For $q \geq 2$, we have $1 + \frac 1{q-1} \leq C_q \leq \sqrt[q-1] q = 1 + \frac{\ln q}{q-1} + o\left( \frac 1q \right)$.
\end{prop}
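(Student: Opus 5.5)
The upper bound follows from the previous two results. Theorem~\ref{thm:rec} gives $C_q = \inf_{n\ge 1}\sqrt[n]{f(n,q)}$, so $C_q \le \sqrt[n]{f(n,q)}$ for every $n$. Take $n = q-1 < q$; the preceding lemma gives $f(q-1,q) = q$, hence $C_q \le \sqrt[q-1]{q}$. For the asymptotic form, write $\sqrt[q-1]{q} = \exp\bigl(\frac{\ln q}{q-1}\bigr)$ and expand: $e^x = 1 + x + O(x^2)$ with $x = \frac{\ln q}{q-1}$. Since $x^2 = O\bigl(\frac{(\ln q)^2}{q^2}\bigr) = o\bigl(\frac 1q\bigr)$, we get $1 + \frac{\ln q}{q-1} + o\bigl(\frac1q\bigr)$.

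For the lower bound I use a weighting (fractional covering) argument. Fix $x \in \ZZ_q^n$. The number of $y$ skirted by $x$ is exactly $(q-1)^n$, since each coordinate of $y$ must avoid one value. If $S$ is a skirting set, every one of the $q^n$ tuples is skirted by some element of $S$, so
\[ |S|\,(q-1)^n \ge q^n, \qquad\text{i.e.}\qquad f(n,q) \ge \Bigl(\frac{q}{q-1}\Bigr)^n = \Bigl(1+\frac1{q-1}\Bigr)^n . \]
Taking $n$-th roots and the infimum over $n$ (or the limit from Theorem~\ref{thm:rec}) gives $C_q \ge 1 + \frac1{q-1}$.

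Both steps are short and I expect no real obstacle. The only points needing care are that the neighbourhood count $(q-1)^n$ is exact, and that the $o(1/q)$ error term is justified by $(\ln q)^2/q^2 = o(1/q)$.
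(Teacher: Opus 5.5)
Your proposal is correct and matches the paper's proof step for step. The lower bound comes from the double count $|S|(q-1)^n \ge q^n$; the upper bound comes from $C_q \le \sqrt[q-1]{f(q-1,q)} = \sqrt[q-1]{q}$ using the lemma for $n<q$; and the asymptotic form comes from expanding $e^{\ln q/(q-1)}$, where your explicit check that $(\ln q)^2/(q-1)^2 = o(1/q)$ is the justification the paper leaves implicit.
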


\begin{proof}
 Since every $q$-ary $n$-tuple skirts $(q-1)^n$ $q$-ary $n$-tuples, if $S$ is a skirting set in $G(n,q)$, then $|S| (q-1)^n \geq q^n$, from which $f(n,q) \geq \left( \frac q{q-1} \right)^n$.
 This implies that $C_q \geq \frac q{q-1}$.
 
 On the other hand, $C_q = \inf_{n \geq 1} \sqrt[n]{f(n,q)} \leq \sqrt[q-1]{f(q-1,q)} = \sqrt[q-1]{q}$.
 Using the Taylor series of the exponential function, we see that $\sqrt[q-1]q = e^{\frac{\ln q}{q-1}} = 1 + \frac{\ln q}{q-1} + \sum_{k=2}^\infty \frac 1{k!} \left( \frac {\ln q}{q-1} \right)^k = 1 + \frac{\ln q} {q-1} + o(1/q)$.
\end{proof}

\section{Small parameters}

\begin{lm}
 If $q \geq 3$, then $f(q,q) \leq 2q-1$.
\end{lm}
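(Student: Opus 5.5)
The plan is an explicit construction of a skirting set of size $2q-1$ in $\ZZ_q^q$: $q-1$ constant vectors together with $q$ vectors that are almost constant.

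Let $\one$ be the all-ones vector and let $e_i$ denote the $i$-th standard unit vector. The candidate set is
\[
S = \{ k\one : 0 \le k \le q-2 \} \cup \{ w_1, \dots, w_q \},
\qquad w_i = (q-1)\one - (q-1)e_i .
\]
Here $w_i$ is the vector with entry $0$ in position $i$ and entry $q-1$ in every other position. Then $|S| = (q-1)+q = 2q-1$. This guess comes from working out $q=3$ by hand. There, using all $q$ constants fails: they skirt every non-permutation but no permutation, and $q-1$ further vectors cannot skirt all $q!$ permutations, since for $q=3$ two vectors skirt at most $4$ of the $6$ permutations. So one constant is dropped and replaced by a family that handles both the permutations and the tuples missing only the dropped value.

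Let $y\in\ZZ_q^q$ be arbitrary. The verification is a short case analysis.

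\emph{Step 1.} The vector $k\one$ skirts $y$ exactly when $k$ does not occur among the entries of $y$. So if $y$ misses some value $k \in \{0,\dots,q-2\}$, it is skirted by $k\one$. From now on assume $y$ contains every value $0,1,\dots,q-2$.

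\emph{Step 2.} The vector $w_i$ skirts $y$ exactly when $y(i) \neq 0$ and $y(j) \neq q-1$ for all $j \neq i$. There are two sub-cases.
\begin{itemize}
\item[(a)] If $q-1$ does not occur in $y$, the second condition holds for every $i$. It only remains to pick $i$ with $y(i)\neq 0$. Such an $i$ exists because $y$ contains the value $1$, and $1 \le q-2$ uses $q\ge 3$.
\item[(b)] If $q-1$ occurs in $y$, then $y$ contains all $q$ values in its $q$ coordinates. Hence $y$ is a permutation of $\ZZ_q$, and $q-1$ occurs in exactly one position $i$. Then $y(i)=q-1\neq 0$ and no other coordinate equals $q-1$, so $w_i$ skirts $y$.
\end{itemize}

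Thus $S$ is a skirting set and $f(q,q)\le 2q-1$. The only real obstacle was finding the construction. Once $S$ is fixed, the verification is the case split above. The hypothesis $q \geq 3$ is used exactly once, in case (a), to guarantee that $y$ contains a nonzero value other than $q-1$.
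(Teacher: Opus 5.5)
Your proof is correct and is essentially the paper's. Your set is exactly the paper's set $\{a\one : 1 \le a \le q-1\} \cup \{e_i : 1 \le i \le q\}$ translated by $-\one$ (note $w_i + \one = e_i$ in $\ZZ_q^q$), and translation preserves Hamming distance; your case split on whether the omitted symbol occurs (once, forcing a permutation, or not at all, where $q \ge 3$ supplies a usable position) is the paper's case split up to this relabeling.
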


\begin{proof}
 Let $e_i$ denote the standard basis vectors in $\ZZ_q^q$.
 Consider the set
 \[
  S = \sett{a \one}{ a = 1, \dots, q-1} \cup \sett{e_i}{i = 1, \dots, q}.
 \]
 We prove that $S$ is a skirting set.
 Take an $n$-tuple $y$.
 Suppose that $y$ is not skirted by any $n$-tuple $a \one$ of $S$.
 Then all symbols from $1$ to $q-1$ occur in $y$.
 Hence, symbol 0 occurs at most once.
 If it occurs in position $i$, then $y$ is skirted by $e_i$.
 If symbol 0 does not occur, then choose a position $i$ with $y(i) = 2$.
 Then $y$ is skirted by $e_i$.
\end{proof}

\begin{prop}
 If $n$ is even, then
 \[
  f(n,3) \leq 2^{n/2} + \sum_{i=1}^{\lceil n/4 \rceil } \binom{n/2}{2i-1} 2^{(n/2) - 2i + 1}.
 \]
\end{prop}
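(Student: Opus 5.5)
The plan is to build the skirting set blockwise. I split the $n$ coordinates into $m=n/2$ consecutive blocks of length $2$, and build $S$ as a union of product-type families over the blocks. On each block a vector of $S$ carries one of a few fixed patterns from $\ZZ_3^2$. First I note when a pattern skirts a block $(y_1,y_2)$ of $y$.
\begin{itemize}
\item A constant pattern $(a,a)$ skirts $(y_1,y_2)$ iff $a\notin\{y_1,y_2\}$.
\item A pattern $(a,b)$ with $a\neq b$ skirts $(y_1,y_2)$ iff $y_1\neq a$ and $y_2\neq b$.
\end{itemize}
As a sanity check, the right-hand side equals $2^m+\sum_{j \text{ odd}} \binom{m}{j}2^{m-j} = 2^m + (3^m-1)/2$. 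This follows from $\sum_j\binom mj 2^{m-j}(\pm1)^j=3^m$ or $1$. So the target has the shape ``a main family of size $2^m$ plus, for each odd $j$ and each $j$-set $T$ of blocks, a family of size $2^{m-j}$''.

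The main family is $\{(1,1),(2,2)\}^m$, of size $2^m$. By the first criterion, it skirts every $y$ whose blocks each miss $1$ or miss $2$. Call a block of $y$ \emph{bad} if its entry set is $\{1,2\}$, i.e.\ it is $(1,2)$ or $(2,1)$. Let $B$ be the set of bad blocks. If $B=\emptyset$, then on every block I can choose $(1,1)$ or $(2,2)$ to skirt it, so $y$ is skirted by the main family.

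For each odd $j$ and each $j$-set $T$ of blocks, the second family consists of vectors carrying $(0,0)$ on the blocks of $T$ and a free choice from $\{(1,1),(2,2)\}$ on the other $m-j$ blocks. This gives $\binom mj 2^{m-j}$ vectors, matching the sum. When $|B|$ is odd, take $T=B$. Then $(0,0)$ skirts every bad block, since bad blocks contain no $0$. Every remaining block is non-bad, so it misses $1$ or $2$ and one of $(1,1),(2,2)$ skirts it. Hence $y$ is skirted.

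The main obstacle is the case $|B|$ even and nonzero. Here the parity restriction must be exploited, and I would either alter $T$ or refine the patterns. My first attempt is to take $T=B\triangle\{k\}$ for a suitable block $k$. If some non-bad block $k$ contains no $0$, put $T=B\cup\{k\}$. If some bad block can instead be handled by the free part, drop it from $T$. The remaining hard subcase is when every non-bad block contains a $0$ and $|B|$ is even. For this I would modify the patterns on the free blocks, for instance using $(1,2),(2,1)$ in place of $(1,1),(2,2)$ on one distinguished block, so that a bad block can be skirted without $(0,0)$. This changes parity without changing the counts. Checking that some such uniform rule covers every $y$ exactly in the even case is the step I expect to require the real work. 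I would organise this check by a case analysis on the position of the first bad block.
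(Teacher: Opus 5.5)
There is a genuine gap, and it is exactly where you expected: the case $|B|$ even and nonzero is never handled. Moreover, the family you actually wrote down is not a skirting set. Already for $n=4$ your $8$ vectors are $1111,1122,2211,2222,0011,0022,1100,2200$, and none of them skirts $y=(1,2,2,1)$. Among the constant patterns, the only one skirting a block $12$ or $21$ is $00$, so the unique candidate would be $0000$, which has an even number of $00$-blocks.

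Your proposed repairs do not close this gap. The sub-case ``a bad block can be handled by the free part'' is empty: a bad block contains both $1$ and $2$, so it is skirted by neither $(1,1)$ nor $(2,2)$. Replacing $\{(1,1),(2,2)\}$ by $\{(1,2),(2,1)\}$ on one distinguished block of the main family breaks the case $B=\emptyset$, because a block $(1,1)$ of $y$ is skirted by neither $(1,2)$ nor $(2,1)$.

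The missing idea is to use the non-constant patterns for the size-$2^m$ family, and to split cases on constant blocks of $y$ rather than on bad blocks. Keep your second family exactly as it is. It consists of constant blocks with an odd number of $(0,0)$'s, which is the paper's $S_2$ with symbols $0$ and $2$ swapped. Take the main family to be $\{(1,2),(2,1)\}^m$ instead.

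\begin{itemize}
\item Suppose $y$ has a block $(a,a)$ with $a\in\{1,2\}$. Choose constant patterns blockwise to skirt $y$; this is possible since each block has at most two symbols. If the number of $(0,0)$'s is even, toggle that block between $(0,0)$ and $(b,b)$, where $\{a,b\}=\{1,2\}$. Both skirt $(a,a)$, so the result is still skirted and now has odd parity.
\item Suppose $y$ has no block $(1,1)$ or $(2,2)$. Then every block lies in $\{00,01,02,10,12,20,21\}$. Here $(2,1)$ skirts $00,02,10,12$ and $(1,2)$ skirts $00,01,20,21$, so the main family skirts $y$.
\end{itemize}

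This is the paper's argument. The parity toggle needs a block of $y$ that is skirted by two constants, exactly one of which is the parity-counted constant. Only the constant blocks $(1,1)$ and $(2,2)$ of $y$ provide such a block, which is why every remaining $y$ must be caught by non-constant patterns. Your count $2^m+(3^m-1)/2$ for the right-hand side is correct and unaffected by this fix.
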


\begin{proof}
Let $n$ be even.
Write $\tilde 0 = 01$, $\tilde 1 = 10$, $\bar 0 = 00$, $\bar 1 = 11$, $\bar 2 = 22$.
Define
\begin{align*}
 S_1 = \left\{ \tilde 0, \tilde 1 \right \}^{\frac n2}, &&
 S_2 = \sett{ x \in \left \{ \bar 0, \bar 1, \bar 2 \right \}^{\frac n2} }{ \text{the number of $i$'s with $x(2i) = 2$ is odd}}.
\end{align*}
We verify that $S = S_1 \cup S_2$ is a skirting set of $\ZZ_3^n$.
Take a vector $y$ in $\ZZ_3^n$, and view $y$ as a vector of $n/2$ blocks of length 2.
If none of the blocks occuring in $y$ are of the form $\bar 0$ or $\bar 1$, then $y$ is skirted by $S_1$.
Indeed, every block that occurs in $y$ either belongs to $\{10, 12, 20, 22 \}$ in which case it is skirted by $\tilde 0$, or it belongs to $\{01, 02, 21 \}$, in which case it is skirted by $\tilde 1$.
Thus, we can make an $n$-tuple $x \in S_1$ out of blocks of the form $\tilde 0$ and $\tilde 1$ that skirts $y$.

Now suppose that $y$ has a block of the form $\bar 0$ or $\bar 1$, say in the $i$th block.
Then we verify that $S_2$ skirts $y$.
Indeed, every block of length 2 contains at most 2 distinct symbols, hence it is skirted by one of the blocks $\bar 0, \bar 1, \bar 2$.
Thus, we can make a vector $x$ consisting of the blocks $\bar 0, \bar 1, \bar 2$ that skirts $y$.
If $x$ has an even number of blocks equal to $\bar 2$, then we can fix this by altering the $i$th block as follows.
Consider first the case where the $i$th block of $y$ is $\bar 0$.
Then the $i$th block of $x$ is either $\bar 1$, in which case we switch it to $\bar 2$, or it is $\bar 2$, in which case we switch it to $\bar 1$.
The resulting alteration of $x$ then has an odd number of occurrences of $\bar 2$, and hence is an element of $S_2$ that skirts $y$.
The case where the $i$th block of $y$ is $\bar 1$ is completely analogous.

We conclude that $S$ is a skirting set of $\ZZ_3^n$.
Next, we compute its size.
 The size of $S_1$ is clearly $2^{n/2}$.
 For a given number $i$, we find $\binom{n/2}{i} \cdot 2^{n/2-i}$
 elements of $\{ \bar{0}, \bar{1}, \bar{2}\}$ 
 with precisely $i$ entries equal to $\bar{2}$.
 Thus, the size of $S_1 \cup S_2$ is as asserted.
\end{proof}

This yields the following upper bounds on $f(n, 3)$:

\begin{center}
\begin{tabular}{c | c c c c c c c}
 $n$ & 4 & 6 & 8 & 10 & 12 & 14 & 16 \\ \hline
 $f(n,3) \leq$ & 8 & 21 & 56 & 153 & 428 & 1221 & 3563
\end{tabular}
\end{center}

Next, we present a table containing some upper and lower bounds on $f(n,q)$ for $q = 3$ or $q=4$ and $n \leq 8$.
The bounds combine our results with some computer generated data.
We present one more helpful tool.

\begin{lm}
 \label{Lm:Increasing}
 The function $f(n,q)$ is strictly increasing in $n$.
\end{lm}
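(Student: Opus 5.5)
We will show $f(n,q) < f(n+1,q)$ for every $n \geq 1$ and $q \geq 2$. The idea is to take a minimum skirting set of $\ZZ_q^{n+1}$, project it onto the first $n$ coordinates, and check that the projection is a skirting set of $\ZZ_q^n$ that is strictly smaller.

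So let $S$ be a skirting set of $\ZZ_q^{n+1}$ with $|S| = f(n+1,q)$. For a symbol $a \in \ZZ_q$, define
\[
 S_a = \sett{x \in S}{x(n+1) \neq a}, \qquad \pi(S_a) = \sett{(x(1),\dots,x(n))}{x \in S_a} \subseteq \ZZ_q^n .
\]
We first claim that $\pi(S_a)$ is a skirting set of $\ZZ_q^n$. Take $y \in \ZZ_q^n$ and extend it to $y' = (y,a) \in \ZZ_q^{n+1}$. Some $x \in S$ skirts $y'$. Then $x(n+1) \neq a$, so $x \in S_a$, and the truncation of $x$ skirts $y$. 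This gives
\[
 f(n,q) \leq |\pi(S_a)| \leq |S_a| = |S| - \bigl|\sett{x\in S}{x(n+1)=a}\bigr| .
\]

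It remains to choose $a$ so that the removed set is nonempty. Since $S$ is nonempty, we can pick any $x_0 \in S$ and set $a = x_0(n+1)$. Then $x_0 \notin S_a$, so $|S_a| \leq |S| - 1$. Combining this with the previous inequality gives $f(n,q) \leq f(n+1,q) - 1$, which is strict monotonicity.

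The only points needing care are that $S$ is nonempty and that the projection $\pi$ may merge elements. Both work in our favour, since merging only makes $\pi(S_a)$ smaller. I do not expect a genuine obstacle. The key idea is that fixing the last coordinate of $y'$ to a value that is actually used in $S$ discards at least one element of $S$.
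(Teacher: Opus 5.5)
Your proof is correct. It rests on the same trick as the paper's: truncate a minimum skirting set of the longer space, and extend $y$ by the last coordinate of some element of $S$, which forces the skirting element to avoid that symbol there. The execution differs. The paper projects all of $S$ and argues by contradiction that the projection $S'$ has no essential element, so $f(n-1,q)<|S'|\le|S|$. You instead exhibit a smaller skirting set directly, namely the truncation of the sub-family $S_a$.

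Your version buys two things.

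\textbf{No injectivity issue.} The paper's contradiction step extends a vector $y'$ uniquely skirted by $x'$ to $(y',x_n)$. This tacitly requires $x=(x',x_n)$ to be the only element of $S$ whose truncation is $x'$, because another element $(x',c)$ with $c\neq x_n$ would skirt $(y',x_n)$. So the case where truncation is not injective on $S$ has to be covered separately, via $|S'|<|S|$. Your direct count never runs into this.

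\textbf{A multiplicative gap.} Your argument holds for every $a$ simultaneously. The classes $\{x\in S : x(n+1)=a\}$ partition $S$, so choosing $a$ to be the most frequent last-coordinate symbol removes at least $|S|/q$ elements. This gives
\[
 f(n,q)\le \frac{q-1}{q}\, f(n+1,q),
\]
a multiplicative rather than additive gap. It is exactly the kind of improvement on $f(n,q)-f(n-1,q)$ that the paper asks for in its concluding remarks. For instance, from the tabulated $f(7,3)\ge 28$ it yields $f(8,3)\ge 42$, instead of the tabulated $29$. It is also tight at $f(4,3)=8$, $f(5,3)=12$ and $f(6,3)=18$.

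What the paper's formulation offers in return is structural: every element of the projected set is inessential. That is information about the set itself rather than a size bound.
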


\begin{proof}
 Suppose that $S$ is a minimum size skirting set in $\ZZ_q^n$.
 Consider the function $g: \ZZ_q^n \to \ZZ_q^{n-1}: (x_1, \dots, x_n) \mapsto (x_1, \dots, x_{n-1})$ that removes the last coordinate, and let $S'$ be the image of $S$ under $g$.
 If $y' \in \ZZ_q^{n-1}$, then $y' = g(y)$ for some $y \in \ZZ_q^n$, and $S$ contains some $n$-tuple $x$ that skirts $y$, hence $x' = g(x)$ skirts $y'$.
 Therefore, $S'$ is a skirting set in $\ZZ_q^{n-1}$.
 We claim that $S'$ is not a minimal skirting set.
 Suppose the contrary.
 Take $x = (x',x_n) \in S$ with $x' = g(x) \in \ZZ_q^{n-1}$.
 Then there exists an element $y' \in \ZZ_q^{n-1}$ that is uniquely skirted by $x'$.
 But then the vector $y=(y',x_n)$ is not skirted by $S$, which is a contradiction.
 Hence we may conclude that $f(n-1,q) < |S'| \leq |S| = f(n,q)$.
\end{proof}

If an entry in the table is a single number, then this is the exact value of $f(n,q)$.
Otherwise, the entry is an interval that contains $f(n,q)$.

\begin{table}[h]
 \centering
 \begin{tabular}{c | c c c c c c c}
  $n$ & 2 & 3 & 4 & 5 & 6 & 7 & 8 \\ \hline
  $f(n,3)$ & 3 & 5 & 8 & 12 & 18 & [28,30] & [29,54] \\ 
  $f(n,4)$ & 3 & 4 & 7 & 10 & [13,16] & [14,28] & [15,40]
 \end{tabular}
\end{table}

The table tells us that $C_3 \leq \sqrt[6]{f(6,3)} = \sqrt[6]{18} < 1.6189$, and $C_4 \leq \sqrt[5]{f(5,4)} = \sqrt[5]{10} < 1.5849$. 
The values in the table were found with Gurobi using a standard integer linear program.
Witnesses for $f(5, 3)$, $f(6, 3)$, and $f(5, 4)$ are given below.

\[
 \begin{pmatrix}
  0 & 1 & 1 & 0 & 2 & 2 & 0 & 1 \\
  0 & 1 & 1 & 0 & 2 & 2 & 1 & 0 \\
  0 & 1 & 0 & 1 & 0 & 1 & 2 & 2 \\
  0 & 1 & 0 & 1 & 1 & 0 & 2 & 2
 \end{pmatrix}
\]

\[
 \left( \begin{array}{c c c c c c c c c c c c c c ccccccc}
  0 & 1 & 1 & 0 & 0 & 1 & 1 & 0 & 2 & 2 & 2 & 2 & 0 & 1 & 1 & 0 & 0 & 1 & 1 & 0 & 2 \\
  0 & 1 & 1 & 0 & 0 & 1 & 1 & 0 & 2 & 2 & 2 & 2 & 1 & 0 & 0 & 1 & 1 & 0 & 0 & 1 & 2 \\
  0 & 1 & 0 & 1 & 0 & 1 & 0 & 1 & 0 & 1 & 1 & 0 & 2 & 2 & 2 & 2 & 0 & 1 & 0 & 1 & 2 \\
  0 & 1 & 0 & 1 & 0 & 1 & 0 & 1 & 1 & 0 & 0 & 1 & 2 & 2 & 2 & 2 & 1 & 0 & 1 & 0 & 2 \\
  0 & 1 & 0 & 0 & 1 & 0 & 1 & 1 & 0 & 1 & 0 & 1 & 0 & 1 & 0 & 1 & 2 & 2 & 2 & 2 & 2 \\
  0 & 1 & 0 & 0 & 1 & 0 & 1 & 1 & 1 & 0 & 1 & 0 & 1 & 0 & 1 & 0 & 2 & 2 & 2 & 2 & 2 
 \end{array} \right)
\]

\[
\begin{pmatrix}
    0 & 1 & 0 & 1 & 3 & 3 & 2 & 2 & 1 & 3 \\
    0 & 1 & 0 & 1 & 3 & 3 & 2 & 2 & 3 & 1 \\
    0 & 1 & 2 & 2 & 0 & 1 & 3 & 2 & 3 & 3 \\
    0 & 1 & 2 & 2 & 0 & 1 & 2 & 3 & 3 & 3 \\
    0 & 1 & 1 & 0 & 1 & 0 & 3 & 3 & 2 & 2 
\end{pmatrix}
\]

\section{Skirting arrays}

In this section, we extend the concept of a skirting set to a skirting array.
For an $n$-tuple $x$ and a subset $T \subseteq \{1, \dots, n\}$, we denote by $x_T$ the $|T|$-tuple obtained by only retaining the coordinate positions labelled by elements of $T$.
Say that an $N\times n$ 
array $M$ with entries from $\ZZ_q$ is a  \emph{skirting array of strength $t$}, an $\SA(N;t,n,q)$, if every 
submatrix of $M$ obtained by restriction to $t$ columns skirts the $q$-ary $t$-tuples; i.e.\ for every $t$-set
$T \subset \{1,\ldots,n\}$,  every $y\in \ZZ_q^t$ is at maximum distance from $x_T$ for at least one row $x$ of $M$.
Let $\SAN(t,n,q)$ be the smallest number of rows $N$ in such an array. This is analogous to the notation for 
covering arrays \cite[Section VI.10]{Handbook2006}:  a \emph{covering array of strength $t$} with $N$ rows, 
$n$ columns, and entries from an alphabet of size $q$, denoted a $\CA(N;t,n,q)$, is an array
having the property that restriction to any $t$ columns results in an $N\times t$ array containing every 
possible $t$-tuple at least once as a row. (If the number  of rows containing a $t$-tuple is a constant 
$\lambda$, then this covering array of strength $t$ is, in fact, an orthogonal array of strength $t$.) The 
smallest number of rows $N$ in any $\CA(N;t,n,q)$ is the \emph{covering array number} $\CAN(t,n,q)$. 
Note that $\SAN(t,n,2)=\CAN(t,n,2)$. 

\begin{lm} \label{Lem:CAtoSA}
\[ 
\SAN(t,n,q) \le \CAN(t,n,q) + 1 - (q-1)^t~. 
\]
\end{lm}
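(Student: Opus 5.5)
The plan is to start from an optimal covering array $M$, a $\CA(N;t,n,q)$ with $N=\CAN(t,n,q)$, relabel its symbols column by column, add one row, and then delete $(q-1)^t$ rows that have become redundant.

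\textbf{Step 1: any covering array skirts.} For a $t$-set $T$ and $y\in\ZZ_q^t$, the tuple $y+\one$ differs from $y$ in every coordinate. It occurs as $x_T$ for some row $x$ of $M$, so $x$ skirts $y$. Hence $\SAN(t,n,q)\le \CAN(t,n,q)$, and the content of the lemma is the saving of $(q-1)^t-1$ rows.

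\textbf{Step 2: which tuples are actually needed.} For $y\in\ZZ_q^t$ there are $(q-1)^t$ admissible restrictions $x_T$, so a skirting array needs far less than full coverage. Add the all-zero row $\mathbf 0$. It skirts every $y_T$ with no zero entry. Now suppose $y_T$ has a zero but is not identically zero. Put $x_i=0$ wherever $y_i\neq 0$, and $x_i=1$ wherever $y_i=0$. This $x_T$ skirts $y_T$ and itself contains a zero. Only $y_T=\mathbf 0$ forces $x_T\in\{1,\dots,q-1\}^t$, and there a single such tuple suffices. Consequently $M\cup\{\mathbf 0\}$ still skirts every $y_T$ if, for every $T$, the remaining rows cover all $t$-tuples containing a $0$, together with at least one tuple of $\{1,\dots,q-1\}^t$. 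In particular, for each $T$ the rows whose $T$-restriction is one of the $(q-1)^t$ zero-free tuples are all dispensable but one.

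\textbf{Step 3: the main obstacle.} The deleted rows must be chosen once, globally, whereas Step 2 only identifies dispensable rows column-set by column-set. What I would try first:
\begin{itemize}
\item Apply an independent symbol permutation in each column of $M$. This preserves the covering property, and we may choose which symbol becomes $0$ in each column.
\item Then single out $(q-1)^t$ rows whose entries are all nonzero in every column. Such rows restrict to zero-free tuples on every $T$, so by Step 2 they can be deleted simultaneously, provided each $T$ keeps some zero-free tuple among the surviving rows. If that fails for some $T$, the added row $\mathbf 0$ should be replaced by a cleverly chosen row, for instance one that is all-zero except for a designated pattern, to cover $y_T=\mathbf 0$ as well.
\end{itemize}
Producing enough entirely nonzero rows is the delicate step. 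A random relabelling only gives about $N\bigl(\tfrac{q-1}{q}\bigr)^n$ of them, which is not enough in general. So I expect the real argument needs a more careful choice of the deleted rows and of the added row, possibly trading the single added row against a slightly different family of deletions so that the counts match exactly. This is where I anticipate the difficulty, and where the off-by-one bookkeeping between the added row and the case $y_T=\mathbf 0$ must be settled.
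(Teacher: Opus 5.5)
Your proposal does not yet prove the lemma. Step~3 ends by naming an obstacle rather than resolving it, and the route sketched there is not guaranteed to work. A set of $(q-1)^t$ rows can be made entirely nonzero by column-wise relabelling only if those rows avoid some symbol in every column, and once $(q-1)^t\ge q$ nothing about a covering array forces this. Even in the easiest case $t=1$, your plan already breaks. There every column of a $\CA(q;1,n,q)$ is a permutation, so the single surviving row becomes all-zero after your relabelling. Together with the added $\mathbf 0$, no row skirts $y_T=\mathbf 0$, and you are thrown back on the unspecified ``cleverly chosen row''. The difficulty comes from reading the $+1$ as ``add one row, delete $(q-1)^t$''. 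The bound is simply $\CAN(t,n,q)-\bigl((q-1)^t-1\bigr)$, and no added row is needed.

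The missing idea is a counting version of your own Step~1. Fix $T$ and $y$.
\begin{itemize}
\item There are $(q-1)^t$ distinct $t$-tuples at distance $t$ from $y_T$, not just $y_T+\one$.
\item Each of them occurs as $x_T$ for some row $x$ of the covering array.
\item Distinct tuples must come from distinct rows.
\end{itemize}
Hence at least $(q-1)^t$ different rows skirt $y_T$ on $T$. Deleting \emph{any} $(q-1)^t-1$ rows therefore still leaves a skirting row for every pair $(T,y)$. This needs no relabelling and no global choice of which rows to delete. It is exactly the paper's one-line proof, and it removes the local-versus-global issue you identified in Step~3.
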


\begin{proof}
Let $M$ be obtained from  a $\CA(N;t,n,q)$  by deleting the first $(q-1)^t-1$ rows. If $y$ is any $q$-ary 
$n$-tuple and $T \subset \{1,\ldots,n\}$ is any set of $t$ coordinates, then among the $(q-1)^t$
$t$-tuples at maximum distance from $y_T$, at least one of them remains as the restriction $x_T$
of some row of $M$. 
\end{proof}

\begin{prop}
For any $n$ and $q$, any $1\le v<q$
\[
f(n,q) \le q-v + \SAN(n+v-q,n,v).
\]
\end{prop}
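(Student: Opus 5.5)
The plan is to construct an explicit skirting set that combines constant vectors, in the spirit of the Lemma giving $f(n,q)=n+1$ for $n<q$, with a skirting array over the smaller alphabet $\{0,\dots,v-1\}\subseteq \ZZ_q$. Let $M$ be an $\SA(N;t,n,v)$ with $t=n+v-q$ and $N=\SAN(t,n,v)$, whose entries are read as elements $0,\dots,v-1$ of $\ZZ_q$. Let
\[
S = \sett{a\one}{a = v, \dots, q-1} \cup \{\text{rows of } M\},
\]
which has at most $q-v+\SAN(n+v-q,n,v)$ elements. If $t\le 0$, the statement should be read with the convention that no array is needed (or $n<q$, and the earlier Lemma already applies), so I will assume $1\le t\le n$.

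Let $y\in\ZZ_q^n$. First suppose some symbol $a\in\{v,\dots,q-1\}$ does not occur in $y$. Then $a\one$ skirts $y$, and we are done.

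Otherwise, every one of the $q-v$ symbols $v,\dots,q-1$ occurs in $y$. Pick one position for each such symbol; this gives a set $P$ of $q-v$ positions. Its complement $T'$ has $n-(q-v)=t$ positions. On $T'$ the entries of $y$ lie in $\ZZ_q$, but we need a $v$-ary target to invoke the skirting array property. So define $z\in\ZZ_v^{T'}$ by $z(i)=y(i)$ if $y(i)<v$, and $z(i)=0$ (or any symbol) otherwise.

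By the strength-$t$ property, some row $x$ of $M$ has $x_{T'}$ at distance $t$ from $z$. Check that this row skirts $y$:
\begin{itemize}
\item At $i\in T'$ with $y(i)<v$, we have $x(i)\ne z(i)=y(i)$.
\item At $i\in T'$ with $y(i)\ge v$, we have $x(i)<v\le y(i)$, so the entries differ automatically.
\item At $i\in P$, we have $y(i)\ge v>x(i)$, so they differ as well.
\end{itemize}
Hence $d(x,y)=n$.

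The step needing the most care is the choice of $T'$. The positions holding large symbols are skirted for free by any $v$-ary row, so at first it seems one needs more slack. In fact exactly $q-v$ positions must be set aside, and these are guaranteed only because all large symbols appear in $y$. If instead some large symbol were missing, a constant vector handles $y$.

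Two edge cases should be checked:
\begin{itemize}
\item If $t>n$, which cannot happen since $v<q$.
\item If $t\le 0$, then $n\le q-v$. Taking $S$ to be the constants $a\one$ for $a=v,\dots,q-1$ together with any single row works when $n<q-v+1$, since the pigeonhole argument of the earlier Lemma applies. The convention $\SAN(t,n,v)\ge 1$ for $t\le 0$ is then consistent with this.

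\end{itemize}
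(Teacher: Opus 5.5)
Your proof is correct and uses the same construction as the paper: the $q-v$ constant vectors on the large symbols, together with the rows of an $\SA(N;n+v-q,n,v)$ over the small symbols, where the strength-$(n+v-q)$ property is applied once every large symbol occurs in $y$. You also make explicit two points the paper's terse proof leaves implicit: filling the large-symbol positions inside $T'$ with an arbitrary small symbol to obtain a $v$-ary target, and the degenerate case $n \le q-v$.
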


\begin{proof}
Let $M$ be an array composed of an $\SA(N; n+v-q,n,v)$ followed by $\sett{a \one}{ a = v+1, \dots, q}$. If $y$ is any $q$-ary 
$n$-tuple with at least one entry equal to $i$ for each $i>v$, then at most $n+v-q$ entries of $y$ lie in $\{1,\ldots,v\}$
and some row of the $\SA(n+v-q,n,v)$ skirts $y$.
\end{proof}

\begin{crl}
 \label{Crl:SA}
$f(q,q) \le \CAN(t,q,v) - (q-1)^t$.
\end{crl}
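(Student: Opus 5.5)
The plan is to obtain the corollary by specializing the preceding Proposition to $n=q$ and then applying Lemma~\ref{Lem:CAtoSA}. I read the parameters so that the strength is $t=v$ and the covering array has $q$ columns over an alphabet of size $v$; the term $(q-1)^t$ is then read as the ``$(\text{alphabet}-1)^t$'' term of Lemma~\ref{Lem:CAtoSA}. The natural reading of the statement treats $v$ (with $1 \le v < q$) as free and $t$ as determined by it. I will carry out the two substitutions and then compare the result with the displayed inequality.

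First I put $n=q$ in the Proposition, which gives $f(q,q)\le q-v+\SAN(v,q,v)$, since $n+v-q=v=t$. In words: the $q-v$ constant rows $a\one$ with $a>v$ handle every $y$ that misses some symbol larger than $v$. Any remaining $y$ has at most $v$ coordinates with entries in $\{1,\dots,v\}$, so these coordinates lie inside some $t$-set $T$. A row of the skirting array of strength $t$ skirts $y_T$, and it automatically differs from $y$ on the other coordinates, because its entries lie in $\{1,\dots,v\}$. Second, I invoke Lemma~\ref{Lem:CAtoSA} with alphabet size $v$. From a $\CA(N;t,q,v)$ one deletes the $(v-1)^t-1$ rows whose restrictions to any given $T$ can be spared, which gives $\SAN(t,q,v)\le \CAN(t,q,v)+1-(v-1)^t$. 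Chaining the two bounds yields
\[
f(q,q)\le \CAN(t,q,v)+(q-v+1)-(v-1)^t .
\]

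The main obstacle is matching this with the corollary exactly as printed, $f(q,q)\le \CAN(t,q,v)-(q-1)^t$. With the alphabet of the covering array equal to $v<q$, the printed bound cannot hold in general. Indeed $\CAN(t,q,v)\ge v^t$, so if $q-1>v$ the right-hand side is at most $v^t-(q-1)^t<0$. The only consistent reading is that $(q-1)^t$ denotes the Lemma~\ref{Lem:CAtoSA} subtraction term with the Lemma's alphabet parameter, here $v$, in place of $q$. Under that reading the proof must also absorb the additive $q-v+1$. My first attempt at this would be to enlarge the deletion in Lemma~\ref{Lem:CAtoSA}. For the $y$ that survive the constant rows, $y_T$ has all its entries in $\{1,\dots,v\}$ and there are additional constraints (every symbol $>v$ already occurs outside $T$). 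So one can try to choose a covering array containing suitable rows and show that $q-v+1$ further rows become redundant. Concretely, I would order the covering array so that its first rows are the constant rows $a\one$, $a\le v$, together with rows agreeing with them on many coordinates, and check that these are never the sole skirting row. If that fails, the derivation above is the correct proof and the statement should be read with the extra $+(q-v+1)$ and with $(v-1)^t$.
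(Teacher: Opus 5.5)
Your route is the one the paper intends: specialize the Proposition to $n=q$, so the strength is $t=v$, then apply Lemma~\ref{Lem:CAtoSA} with alphabet size $v$. The paper gives no separate proof of the corollary, and you are right that the inequality as printed is false. The paper's only use of it, $f(20,20)\le 35$ from the $\SA(19;4,20,4)$, is $20-4+19$, i.e.\ exactly $f(q,q)\le q-v+\SAN(v,q,v)$. So the intended content is your chained bound
\[
f(q,q)\le q-v+\SAN(v,q,v)\le \CAN(v,q,v)+q-v+1-(v-1)^v .
\]

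Two corrections are needed.

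First, drop the plan of absorbing the additive $q-v+1$, because that target is false, not merely hard. For $q=3$ and $v=t=2$ it would give $f(3,3)\le \CAN(2,3,2)-1=3$. But Lemma~\ref{Lm:Increasing} gives $f(3,3)>f(2,3)=3$, and the table gives $f(3,3)=5$. So your fallback, with the extra $+(q-v+1)$, is the only consistent reading.

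Second, your chained bound holds only for $2\le v<q$, not $1\le v<q$. Lemma~\ref{Lem:CAtoSA} deletes $(v-1)^t-1$ rows, which is $-1$ when $v=1$. Moreover, over a one-letter alphabet no entry can differ from anything, so no $\SA(N;1,q,1)$ exists. At $v=1$ your formula would read $f(q,q)\le \CAN(1,q,1)+q=q+1$, and this is false for every $q$. To see this, run the argument of Lemma~\ref{Lm:Increasing} separately for each value $a$ of the last coordinate. The rows of a skirting set $S\subseteq \ZZ_q^q$ with last entry $\neq a$ truncate to a skirting set of $\ZZ_q^{q-1}$. Hence there are at least $f(q-1,q)=q$ such rows for each $a$. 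Summing over $a$ gives $(q-1)|S|\ge q^2$, so $f(q,q)\ge q+2$.

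A last small point concerns the case where fewer than $v$ entries of $y$ lie in $\{1,\dots,v\}$. Then $y_T$ contains symbols outside the array's alphabet. Replace them by arbitrary symbols from $\{1,\dots,v\}$ before invoking the skirting property; the chosen row still differs from $y$ in those positions.
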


Covering array numbers have been studied extensively \cite{tables}. However, the requirement to be a covering array is far
more stringent than what we need, even accounting for the fact that we can reduce the number of tuples by $(q-1)^t-1$.
In Table \ref{Tab:SA}, we present an $\SA(19;4,20,4)$.

\setcounter{MaxMatrixCols}{20}
\begin{table}[h]
\[
A = \begin{bmatrix}
1 & 1 & 1 & 1 & 1 & 1 & 1 & 1 & 1 & 1 & 1 & 2 & 2 & 2 & 3 & 3 & 3 & 4 & 4 & 4 
\\
 1 & 1 & 1 & 1 & 2 & 2 & 2 & 3 & 3 & 3 & 4 & 1 & 1 & 4 & 1 & 3 & 4 & 1 & 3 & 4 
\\
 1 & 3 & 2 & 3 & 4 & 1 & 3 & 1 & 1 & 1 & 4 & 4 & 2 & 4 & 4 & 2 & 1 & 2 & 4 & 3 
\\
 1 & 3 & 4 & 4 & 1 & 1 & 4 & 2 & 2 & 4 & 1 & 3 & 4 & 3 & 1 & 3 & 3 & 1 & 1 & 1 
\\
 2 & 1 & 3 & 4 & 4 & 1 & 4 & 4 & 2 & 3 & 4 & 2 & 1 & 2 & 3 & 2 & 2 & 4 & 2 & 1 
\\
 2 & 2 & 2 & 3 & 2 & 2 & 3 & 3 & 1 & 4 & 4 & 3 & 3 & 3 & 2 & 4 & 3 & 3 & 3 & 2 
\\
 2 & 2 & 3 & 1 & 2 & 4 & 4 & 1 & 4 & 4 & 3 & 3 & 1 & 1 & 1 & 4 & 2 & 2 & 2 & 3 
\\
 2 & 3 & 1 & 4 & 2 & 1 & 4 & 4 & 3 & 1 & 2 & 4 & 4 & 4 & 3 & 2 & 2 & 2 & 2 & 1 
\\
 2 & 3 & 3 & 4 & 3 & 2 & 4 & 2 & 4 & 2 & 3 & 4 & 4 & 2 & 2 & 2 & 1 & 1 & 1 & 2 
\\
 2 & 4 & 4 & 2 & 3 & 3 & 2 & 1 & 1 & 4 & 4 & 4 & 3 & 1 & 1 & 2 & 1 & 1 & 4 & 1 
\\
 3 & 2 & 1 & 1 & 4 & 3 & 4 & 3 & 3 & 3 & 1 & 2 & 2 & 3 & 4 & 3 & 2 & 2 & 3 & 3 
\\
 3 & 2 & 1 & 4 & 3 & 3 & 1 & 2 & 4 & 3 & 3 & 4 & 1 & 4 & 3 & 2 & 4 & 4 & 3 & 4 
\\
 3 & 2 & 2 & 3 & 1 & 2 & 2 & 1 & 2 & 3 & 1 & 1 & 2 & 4 & 3 & 1 & 4 & 1 & 1 & 3 
\\
 3 & 3 & 2 & 1 & 3 & 4 & 1 & 4 & 2 & 2 & 2 & 1 & 2 & 3 & 2 & 4 & 3 & 2 & 2 & 1 
\\
 3 & 4 & 3 & 3 & 4 & 3 & 2 & 1 & 3 & 4 & 4 & 3 & 3 & 2 & 3 & 1 & 4 & 3 & 3 & 1 
\\
 4 & 1 & 1 & 3 & 1 & 4 & 2 & 2 & 4 & 2 & 3 & 2 & 4 & 4 & 4 & 1 & 4 & 2 & 4 & 1 
\\
 4 & 1 & 2 & 2 & 1 & 4 & 3 & 2 & 3 & 1 & 2 & 3 & 1 & 3 & 3 & 3 & 2 & 4 & 2 & 4 
\\
 4 & 2 & 2 & 2 & 4 & 3 & 3 & 4 & 3 & 3 & 2 & 1 & 1 & 2 & 4 & 4 & 3 & 3 & 4 & 2 
\\
 4 & 4 & 1 & 4 & 2 & 1 & 1 & 4 & 2 & 4 & 4 & 2 & 1 & 2 & 3 & 4 & 2 & 2 & 2 & 3 
\end{bmatrix}
\]
 \caption{An $\SA(19;4,20,4)$.\label{Tab:SA}}
\end{table} 

Hence, by Corollary \ref{Crl:SA}, $f(20,20) \leq 35$.

\section{Conclusion}

In this paper, we introduced the concept of skirting sets.
Several problems remain open for future investigation.
\begin{enumerate}
 \item Determine the exact values of the constants $C_q$, or at least improve the bounds presented in Proposition \ref{Prop:Bounds}.
 \item Find constructions of small skirting arrays, and give bounds on $\SAN(t,n,q)$.
 \item In Lemma \ref{Lm:Increasing}, we showed that $f(n,q) > f(n-1,q)$. In the proof, we used that if we take a skirting set in $\ZZ_q^n$, and remove any coordinate, we end up with a skirting set in $\ZZ_q^{n-1}$ with the property that none of its elements are essential, in the sense that after deleting any element, we are still left with a skirting set.
 One would expect that this argument can lead to better lower bounds of $f(n,q) - f(n-1,q)$.
\end{enumerate}



\begin{thebibliography}{XX}
\bibitem{alon} N.~Alon et al., \underline{The hat guessing number of graphs}, J. Combin. Theory Ser. B {\bf 144} (2020), 119--149.
\bibitem{Handbook2006} C.~J.~Colbourn and J.~H.~Dinitz. \underline{Handbook of Combinatorial Designs} (2nd ed.), Chapman \& Hall/CRC, 2007.
\bibitem{tables} Colbourn tables of covering array numbers \url{https://github.com/ugroempi/CAs/blob/main/ColbournTables.md}
\end{thebibliography}
\end{document}